\documentclass[11pt,reqno]{amsart}
\usepackage{enumerate, latexsym, amsmath, amsfonts, amssymb, amsthm, color}

\def\Z{\mathbb Z}

\def\Q{\mathbb Q}

\def\l{\left}
\def\r{\right}
\def\bg{\bigg}
\def\({\bg(}
\def\){\bg)}
\def\t{\text}
\def\f{\frac}

\def\sign{{\rm sign}}

\def\ls{\leqslant}
\def\gs{\geqslant}

\def\la{\lambda}

\def\adj{{\rm adj}}

\def\diag{{\rm diag}}
\def\rank{{\rm rank}}
\def\sign{{\rm sign}}

\def\u{{\mathbf u}}
\def\v{{\mathbf v}}

\def\Proof{\noindent{\it Proof}}
\def\Ack{\medskip\noindent {\bf Acknowledgment}}

\theoremstyle{plain}
\newtheorem{theorem}{Theorem}

\newtheorem{lemma}{Lemma}
\newtheorem{proposition}{Proposition}
\newtheorem{corollary}{Corollary}

\theoremstyle{definition}

\theoremstyle{remark}
\newtheorem{remark}{Remark}

\makeatletter
\@namedef{subjclassname@2020}{%
  \textup{2020} Mathematics Subject Classification}
\makeatother

\allowdisplaybreaks

 \vspace{4mm}

\begin{document}
\hbox{Preprint, {\tt arXiv:2605.16240}}
\medskip

\title
[Evaluation of two determinants involving $q$-integers]
{Evaluation of two determinants \\ involving $q$-integers}

\author
[Zhi-Wei Sun] {Zhi-Wei Sun}

\address{School of Mathematics, Nanjing
University, Nanjing 210093, People's Republic of China}
\email{zwsun@nju.edu.cn}

\keywords{Determinants, $q$-integers, the floor function, the ceiling function, Jacobi symbols.
\newline \indent 2020 {\it Mathematics Subject Classification}. Primary 05A30, 11C20; Secondary 05A19, 11A15, 15A15.
\newline \indent Supported by the Natural Science Foundation of China (grant 12371004).}

\begin{abstract} The $q$-analogue of an integer $m$ is given by $[m]_q=(1-q^m)/(1-q)$.
Let $a$ be an integer, and let $n$ be a positive odd integer. Via discrete Fourier transforms, we
establish the following two identities:
$$\det\left[\left[\left\lfloor\frac{aj-(a+1)k}n\right\rfloor\right]_q\right]_{1\leqslant j,k\leqslant n}=-\left(\frac{a(a+1)}n\right)q^{(1-3n)/2}$$
and
$$\det\left[\left[\left\lceil\frac{(a+1)j-ak}n\right\rceil\right]_q\right]_{1\leqslant j,k\leqslant n}=\left(\frac{a(a+1)}n\right)q^{(n-1)/2},$$
where $(\frac{\cdot}n)$ denotes the Jacobi symbol.
\end{abstract}
\maketitle

\section{Introduction}
\setcounter{lemma}{0}
\setcounter{theorem}{0}
\setcounter{corollary}{0}
\setcounter{remark}{0}
\setcounter{equation}{0}

For any real number $x$, let $\lfloor x\rfloor $
denote the largest integer not exceeding $x$.
The function $\lfloor \cdot\rfloor$ is called the {\it floor function}.
In 2021, the author \cite{S21} evaluated some permanents and determinants with entries involving the floor function. In 2025, S. Fu, Z. Lin and the author \cite{FLS} proved 
that for any positive integer
$n$ the permanent of the matrix $[\lfloor\f{2j-k}n\rfloor]_{1\ls j,k\ls n}$
is $2(2^{n+1}-1)B_{n+1}$, where $B_0,B_1,B_2,\ldots$ are the Bernoulli numbers.

For any integer $m$, its $q$-analogue is given by
$$[m]_q:=\f{1-q^m}{1-q}=-q^m\f{1-q^{-m}}{1-q}.$$
In particular, $[0]_q=0$, $[1]_q=1$, and $[2]_q=1+q$.
Note that $\lim_{q\to1}[m]_q=m$ for all $m\in\Z$.

For a matrix $A=[a_{jk}]_{1\ls j,k\ls n}$ over a field,
we denote the determinant of $A$ by $\det(A)$ or $\det[a_{jk}]_{1\ls j,k\ls n}$.
In this paper, we mainly study certain determinants involving $q$-integers.

Now we state our first theorem.

\begin{theorem} \label{Th1.1} Let $a$ be an integer and let $n$ be a positive odd integer.
Then we have
\begin{equation}\label{floor}\det\l[\l[\l\lfloor\f{aj-(a+1)k}n\r\rfloor\r]_q\r]_{1\ls j,k\ls n}=-\l(\f{a(a+1)}n\r)q^{(1-3n)/2},
\end{equation}
where $(\f{\cdot}n)$ is the Jacobi symbol.
\end{theorem}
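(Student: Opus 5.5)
The plan is to strip off the $q$-integer structure with a rank-one decomposition, reduce to a twisted circulant, and evaluate that circulant via the discrete Fourier transform. Write $m_{jk}=\lfloor (aj-(a+1)k)/n\rfloor$ and $[m]_q=(1-q^m)/(1-q)$. The matrix in \eqref{floor} is
$$\frac{1}{1-q}\,(J-Q),\qquad Q=\bigl[q^{m_{jk}}\bigr]_{1\ls j,k\ls n},$$
where $J$ is the all-ones matrix. Since $J=\mathbf 1\mathbf 1^T$ has rank one, the matrix determinant lemma gives
$$\det(J-Q)=(-1)^n\det(Q)\,\bigl(1-\mathbf 1^TQ^{-1}\mathbf 1\bigr)$$
whenever $Q$ is invertible. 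So I need $\det Q$ and the scalar $\mathbf 1^TQ^{-1}\mathbf 1$.

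\emph{Structure of $Q$.} Let $r_{jk}\in\{0,\dots,n-1\}$ be the residue of $aj-(a+1)k$ mod $n$, so that $m_{jk}=(aj-(a+1)k-r_{jk})/n$. Then
$$Q=D_1CD_2,\qquad D_1=\diag(q^{aj/n}),\quad D_2=\diag(q^{-(a+1)k/n}),\quad C_{jk}=f(r_{jk}),\quad f(r)=q^{-r/n}$$
(working with $q^{1/n}$ formally). Now $\det D_1\det D_2=q^{(a-(a+1))(n+1)/2}=q^{-(n+1)/2}$.

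\emph{Coprime case.} Assume $\gcd(a(a+1),n)=1$. Reindex rows by $x\equiv aj$ and columns by $y\equiv(a+1)k \pmod n$. This turns $C$ into the circulant $[f(x-y)]_{x,y\in\Z/n}$, at the cost of the signs of the permutations $j\mapsto aj$ and $k\mapsto(a+1)k$ of $\Z/n\Z$. By Zolotarev's lemma (extended to Jacobi symbols for odd $n$) these signs are $(\frac an)$ and $(\frac{a+1}n)$, with a fixed-point correction from the index set being $\{1,\dots,n\}$ rather than $\{0,\dots,n-1\}$ that I must check.

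The circulant has eigenvalues
$$\lambda_\zeta=\sum_{r=0}^{n-1}q^{-r/n}\zeta^{r}=\frac{1-q^{-1}}{1-q^{-1/n}\zeta}$$
for $\zeta^n=1$. Their product is
$$\frac{(1-q^{-1})^n}{\prod_\zeta(1-q^{-1/n}\zeta)}=(1-q^{-1})^{n-1}.$$

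For the scalar, use $Q^{-1}=D_2^{-1}C^{-1}D_1^{-1}$. The vectors $D_1^{-1}\mathbf 1$ and $D_2^{-1}\mathbf 1$ are geometric sequences in $q^{1/n}$, and after reindexing they become characters-like vectors. So $\mathbf 1^TQ^{-1}\mathbf 1$ can be expanded in the Fourier basis, where $C^{-1}$ is diagonal with entries $\lambda_\zeta^{-1}$. I expect the resulting sums to collapse by the same geometric-series identities to a simple monomial expression. The target is that $(-1)^n\det Q\,(1-\mathbf 1^TQ^{-1}\mathbf 1)/(1-q)^n$ equals $-q^{(1-3n)/2}$ up to the Jacobi-symbol sign.

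This step, computing $\mathbf 1^TQ^{-1}\mathbf 1$, is the main obstacle. The diagonal twists interact with the multiplicative reindexing, so the vectors are not pure Fourier modes. If it gets messy, I will instead try two alternatives:
\begin{itemize}
\item Use the identity $[m+c]_q=[c]_q+q^c[m]_q$ to perform row operations that directly produce a circulant in $[\,\cdot\,]_q$-entries.
\item Compute $\det(J-Q)=\det(Q)-\sum(\text{cofactors})$ via the bordered determinant $\det\begin{bmatrix}1&\mathbf 1^T\\ \mathbf 1&Q\end{bmatrix}$, and diagonalize it in the same Fourier basis.
\end{itemize}

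\emph{Degenerate case.} If $d=\gcd(a,n)>1$, then replacing $j$ by $j+n/d$ (taken within range, or using $j$ and $j-n/d$) shifts every $m_{jk}$ in that row by the constant $a/d$. Hence row $j+n/d$ equals $[a/d]_q\mathbf 1^T+q^{a/d}\cdot(\text{row }j)$. Take two such pairs, or a triple $j,\ j+n/d,\ j+2n/d$ when possible, and apply the same relation; in all cases $\mathbf 1^T$ is eliminated, giving a linear dependence and hence determinant $0$. When $n/d=1$ is impossible, since $d<n$ is not guaranteed, I will handle $d=n$ separately: the rows are then all affinely related, forcing rank $\ls2<n$ for $n\gs3$. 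The case $n=1$ is direct.

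The case $\gcd(a+1,n)>1$ is symmetric, working with columns. In both cases $(\frac{a(a+1)}n)=0$, which matches the claimed value.
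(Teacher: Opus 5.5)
\textbf{There is a genuine gap: the scalar $\mathbf 1^TQ^{-1}\mathbf 1$, which decides the answer, is never computed.} Everything you actually carry out matches the paper's own argument. This includes the rank-one reduction $\det(J-Q)=(-1)^n\det Q\,(1-\mathbf 1^TQ^{-1}\mathbf 1)$, the factorization $Q=D_1CD_2$, and the Zolotarev/Fourier evaluation $\det Q=(\frac{a(a+1)}n)q^{-(n+1)/2}(1-q^{-1})^{n-1}$. No fixed-point correction is needed there, because $n\mapsto n$ on $\{1,\dots,n\}$ is just $0\mapsto 0$ on $\Z/n\Z$. Your degenerate case via $[m+c]_q=[c]_q+q^c[m]_q$ also works: any divisor $d>1$ of odd $n$ is $\gs 3$, so the triple $1,\,1+n/d,\,1+2n/d$ always fits, and $d=n$ needs no separate treatment.

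Write $s=\mathbf 1^TQ^{-1}\mathbf 1$. Your own formulas give the determinant as $-(\frac{a(a+1)}n)q^{(1-3n)/2}\cdot\frac{1-s}{1-q}$, so in the coprime case the theorem is \emph{equivalent} to $s=q$. Saying you ``expect the sums to collapse'' and listing two untried fallbacks does not establish this. The route you propose, expanding in the Fourier basis where $C^{-1}$ is diagonal, is also the awkward direction. The twisted vectors $D_1^{-1}\mathbf 1$ and $D_2^{-1}\mathbf 1$ turn it into a sum over $n$th roots of unity of rational functions in $\zeta^{a}$ and $\zeta^{a+1}$, which you would still have to evaluate.

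The missing idea is to use Fourier analysis only to \emph{invert} the circulant, then return to the standard basis, where the inverse is sparse. The circulant $T=[q^{-((x-y)\bmod n)/n}]_{x,y\in\Z/n\Z}$ has symbol $(1-q^{-1})/(1-q^{-1/n}\zeta)$, and its reciprocal is linear in $\zeta$. Concretely, $T(I-q^{-1/n}S)=(1-q^{-1})I$ for the cyclic shift $S_{xy}=[\![x\equiv y+1]\!]$, which you can check entrywise. Undoing the permutations $j\mapsto aj$ and $k\mapsto (a+1)k$ gives
\[(C^{-1})_{kj}=\frac{[\![n\mid(a+1)k-aj]\!]-q^{-1/n}[\![n\mid(a+1)k-aj-1]\!]}{1-q^{-1}}.\]
Since $(a+1)k-aj-1=(a+1)(k-1)-a(j-1)$, substituting $(j,k)=(j'+1,k'+1)$ in the second term yields
\[(1-q^{-1})\,\mathbf 1^TQ^{-1}\mathbf 1=\sum_{\substack{1\ls j,k\ls n\\ n\mid(a+1)k-aj}}q^{\frac{(a+1)k-aj}n}-\sum_{\substack{0\ls j,k\ls n-1\\ n\mid(a+1)k-aj}}q^{\frac{(a+1)k-aj}n}.\]
Solutions with $1\ls j,k\ls n-1$ occur in both sums and cancel. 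Because $\gcd(a(a+1),n)=1$, the only other solution in the first range is $(n,n)$, and the only other one in the second is $(0,0)$. This leaves $q-1$, so $\mathbf 1^TQ^{-1}\mathbf 1=q$. This is exactly the paper's route: the explicit inverse in Proposition 2.1, followed by the telescoping computation $s=q$.
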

\begin{remark} Jacobi symbols play important roles in the theory of quadratic residues modulo primes.
For their definition and basic properties, one may consult \cite[pp.\, 56-57]{IR}.
\end{remark}

Letting $q$ tend to $1$ or $2$, we obtain from Theorem \ref{Th1.1} the following corollary.

\begin{corollary} For any integer $a$ and positive odd integer $n$, we have
\begin{equation}\label{1floor}\det\l[\l\lfloor\f{aj-(a+1)k}n\r\rfloor\r]_{1\ls j,k\ls n}=-\l(\f{a(a+1)}n\r)
\end{equation}
and
\begin{equation}\label{2floor}\det\l[2^{\lfloor\f{aj-(a+1)k}n\rfloor}-1\r]_{1\ls j,k\ls n}=-\l(\f{a(a+1)}n\r)2^{(1-3n)/2}.
\end{equation}
\end{corollary}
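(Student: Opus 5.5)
The plan is to read off both identities directly from Theorem \ref{Th1.1} by specializing the variable $q$. Since the entries in \eqref{floor} are the integers $m_{jk}=\lfloor (aj-(a+1)k)/n\rfloor$, I will first observe that for every $m\in\Z$ the $q$-integer $[m]_q$ is a Laurent polynomial in $q$ with integer coefficients. Indeed, $[m]_q=1+q+\cdots+q^{m-1}$ for $m\gs0$ (empty sum if $m=0$). For $m<0$, the identity $[m]_q=-q^m[-m]_q$ gives $[m]_q=-q^{m}(1+q+\cdots+q^{-m-1})$.

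It follows that the left-hand side of \eqref{floor} is an element of $\Z[q,q^{-1}]$. The right-hand side $-(\f{a(a+1)}n)q^{(1-3n)/2}$ also lies there, because $n$ is odd and so $(1-3n)/2$ is an integer. Theorem \ref{Th1.1} therefore holds as an identity in the Laurent polynomial ring $\Z[q,q^{-1}]$. We may then apply the ring homomorphisms $\Z[q,q^{-1}]\to\mathbb{Q}$ sending $q\mapsto1$ and $q\mapsto2$, which commute with taking determinants.

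Under $q\mapsto1$, the explicit formulas above give $[m]_1=m$ for $m\gs0$ and $[m]_1=-(-m)=m$ for $m<0$. The right side becomes $-(\f{a(a+1)}n)$, which yields \eqref{1floor}. Under $q\mapsto2$, we have $[m]_2=(1-2^m)/(1-2)=2^m-1$ for every $m\in\Z$. This holds because the Laurent polynomial $[m]_q$ agrees with the rational function $(1-q^m)/(1-q)$ away from $q=1$. The right side becomes $-(\f{a(a+1)}n)2^{(1-3n)/2}$, giving \eqref{2floor}.

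There is no real obstacle here. The only point needing care is justifying the evaluation at $q=1$, where the defining quotient $(1-q^m)/(1-q)$ is singular, including for negative $m$. This is handled by working with the Laurent polynomial form of $[m]_q$, or equivalently by taking the limit $q\to1$, since the determinant is continuous in $q$ near $1$.
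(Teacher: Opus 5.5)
Your proposal is correct and follows the paper's route: the paper obtains this corollary simply by letting $q\to1$ and $q\to2$ in Theorem~\ref{Th1.1}. Your observation that each $[m]_q$ (including $m<0$) lies in $\Z[q,q^{-1}]$, so the evaluations $q\mapsto1$ and $q\mapsto2$ commute with the determinant, is exactly the justification that step needs.
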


The ceiling function $\lceil\cdot\rceil$ is defined as follows: For any real number $x$, $\lceil x\rceil$
denotes the least integer not smaller than $x$. Our second theorem involve $q$-integers and the ceiling function.

\begin{theorem} \label{Th1.2} Let $a$ be an integer and let $n$ be a positive odd integer.
Then we have
\begin{equation}\label{ceil}\det\l[\l[\l\lceil\f{(a+1)j-ak}n\r\rceil\r]_q\r]_{1\ls j,k\ls n}=\l(\f{a(a+1)}n\r)q^{(n-1)/2}.
\end{equation}
\end{theorem}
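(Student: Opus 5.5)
The plan is to deduce Theorem \ref{Th1.2} directly from Theorem \ref{Th1.1} by turning the ceiling into a floor and then replacing $q$ by $q^{-1}$; no new Fourier-analytic input should be needed.

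\emph{Step 1 (ceiling to floor).} Using $\lceil x\rceil=-\lfloor -x\rfloor$, we get
$$\l\lceil\f{(a+1)j-ak}n\r\rceil=-m_{kj},\qquad m_{kj}:=\l\lfloor\f{ak-(a+1)j}n\r\rfloor.$$
Here $m_{kj}$ is exactly the $(k,j)$ entry of the floor matrix in Theorem \ref{Th1.1}, before the $q$-integer is applied. So the integer matrix underlying \eqref{ceil} is minus the transpose of the one underlying \eqref{floor}.

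\emph{Step 2 (the $q$-integer of a negative number).} For any $m\in\Z$, a direct computation gives
$$[m]_{q^{-1}}=\f{1-q^{-m}}{1-q^{-1}}=q^{1-m}[m]_q.$$
Combining this with $[-m]_q=-q^{-m}[m]_q$ (noted in the introduction) yields
$$[-m]_q=-q^{-1}[m]_{q^{-1}}.$$
The troublesome factor $q^{-m}$ depends on the entry, so it cannot be pulled out of rows or columns. This identity is the key step: it absorbs that factor into the change of variable $q\mapsto q^{-1}$, leaving only the constant $-q^{-1}$.

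\emph{Step 3 (assemble the determinant).} Let $A(q)=\l[[m_{jk}]_q\r]_{1\ls j,k\ls n}$ be the matrix in \eqref{floor}. By Steps 1 and 2, the matrix in \eqref{ceil} equals $-q^{-1}A(q^{-1})^T$. Since $n$ is odd, $(-q^{-1})^n=-q^{-n}$, and transposition does not change the determinant. Applying Theorem \ref{Th1.1} with $q$ replaced by $q^{-1}$ (legitimate because \eqref{floor} is an identity of rational functions in $q$), we obtain
$$\det=-q^{-n}\cdot\l(-\l(\f{a(a+1)}n\r)q^{(3n-1)/2}\r)=\l(\f{a(a+1)}n\r)q^{(n-1)/2},$$
which is \eqref{ceil}.

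The main point to get right is the sign and exponent bookkeeping in Step 2. Everything else is formal once Theorem \ref{Th1.1} is available. If one wanted a proof independent of Theorem \ref{Th1.1}, one would instead redo its discrete Fourier transform argument with ceilings, but the reduction above avoids that.
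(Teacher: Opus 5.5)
Your reduction is correct, and it is a genuinely different route from the paper's. The paper does not derive Theorem \ref{Th1.2} from Theorem \ref{Th1.1}. Instead it repeats the whole argument for the ceiling matrix:
\begin{enumerate}[(i)]
\item it factors $A'=B'Q'C'$;
\item it applies Proposition \ref{Prop2.1} to $P=(Q')^T$ to get $\det(A')$ and $(Q')^{-1}$;
\item it handles the case $\gcd(a(a+1),n)>1$ separately through the rank bound of Lemma \ref{Lem-rank};
\item it uses the matrix-determinant lemma (Lemma \ref{Lem3.1}) together with the computation that the entries of $(A')^{-1}$ sum to $q^{-1}$.
\end{enumerate}

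You instead use $\lceil x\rceil=-\lfloor -x\rfloor$ and $[-m]_q=-q^{-1}[m]_{q^{-1}}$ to write the ceiling matrix as $-q^{-1}A(q^{-1})^T$. Then \eqref{ceil} is simply the image of \eqref{floor} under the automorphism $q\mapsto q^{-1}$ of $\Z[q,q^{-1}]$. Your sign and exponent bookkeeping is right. The degenerate case $\gcd(a(a+1),n)>1$ needs no separate treatment, since both sides are then $0$.

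Your route gives a much shorter proof and shows that the two theorems are equivalent, because the substitution is invertible. In fact the paper's matrix $A'$ is exactly $A(q^{-1})^T$. The same substitution also turns \eqref{q-floor} into \eqref{q-ceil}, via $q^{\lceil x\rceil}=(q^{-1})^{\lfloor -x\rfloor}$ and $(1-q^{-1})^{n-1}=q^{1-n}(1-q)^{n-1}$ for odd $n$. So the paper's parallel computation with $A'$, $Q'$ and $s'$ gains self-containedness, but it yields nothing your reduction does not already supply.
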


Letting $q$ tend to $1$ or $2$, we obtain from Theorem \ref{Th1.1} the following corollary.

\begin{corollary} For any integer $a$ and positive odd integer $n$, we have
\begin{equation}\label{1ceil}\det\l[\l\lceil\f{(a+1)j-ak}n\r\rceil\r]_{1\ls j,k\ls n}=\l(\f{a(a+1)}n\r)
\end{equation}
and
\begin{equation}\label{2ceil}\det\l[2^{\lceil\f{(a+1)j-ak}n\rceil}-1\r]_{1\ls j,k\ls n}=\l(\f{a(a+1)}n\r)2^{(n-1)/2}.
\end{equation}
\end{corollary}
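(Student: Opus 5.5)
The plan is to deduce both identities directly from Theorem \ref{Th1.2} by specializing $q$. The one point needing care is that (\ref{ceil}) is an identity between \emph{functions} of $q$, so we must make sure both sides may be evaluated at $q=1$ and $q=2$.

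First we check the nature of the entries. For every integer $m$, the $q$-integer $[m]_q$ is a Laurent polynomial in $q$ with integer coefficients:
\begin{itemize}
\item for $m\gs0$ it is $1+q+\cdots+q^{m-1}$;
\item for $m<0$ it is $-q^{m}(1+q+\cdots+q^{-m-1})$.
\end{itemize}
Here $\lceil((a+1)j-ak)/n\rceil$ may be negative, for example when $a<0$. Even so, each entry of the matrix in (\ref{ceil}) lies in $\Z[q,q^{-1}]$. Hence the determinant also lies in $\Z[q,q^{-1}]$.

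The right-hand side $\left(\frac{a(a+1)}n\right)q^{(n-1)/2}$ also lies in $\Z[q,q^{-1}]$. Theorem \ref{Th1.2} holds for all $q\ne 0,1$, which is infinitely many values. Two Laurent polynomials that agree at infinitely many points are equal. So (\ref{ceil}) is an identity in $\Z[q,q^{-1}]$, and we may substitute any nonzero value of $q$, including $q=1$.

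Next we specialize.
\begin{itemize}
\item At $q=1$, the Laurent polynomial $[m]_q$ evaluates to $m$. Both the $m\gs0$ and $m<0$ formulas give this, consistent with $\lim_{q\to1}[m]_q=m$. The determinant therefore becomes $\det[\lceil((a+1)j-ak)/n\rceil]_{1\ls j,k\ls n}$. The right side becomes $\left(\frac{a(a+1)}n\right)$, since $1^{(n-1)/2}=1$. This gives (\ref{1ceil}).
\item At $q=2$, we have $[m]_2=(1-2^m)/(1-2)=2^m-1$ for every integer $m$. The left side becomes $\det[2^{\lceil((a+1)j-ak)/n\rceil}-1]_{1\ls j,k\ls n}$. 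The right side becomes $\left(\frac{a(a+1)}n\right)2^{(n-1)/2}$. This gives (\ref{2ceil}).
\end{itemize}

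There is no substantial obstacle; all of the depth lies in Theorem \ref{Th1.2}. The only step needing a remark is the legitimacy of putting $q=1$, where the defining quotient $(1-q^m)/(1-q)$ is formally $0/0$. This is handled by working with the Laurent-polynomial form of $[m]_q$, or equivalently by continuity of the determinant in $q$.
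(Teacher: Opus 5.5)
Your proposal is correct and follows the paper's own route: it specializes Theorem~\ref{Th1.2} at $q=1$ (as a limit, or equivalently by evaluating the Laurent polynomial identity) and at $q=2$, using $[m]_1=m$ and $[m]_2=2^m-1$. Your remark that both sides lie in $\Z[q,q^{-1}]$ makes explicit the justification the paper leaves implicit; the paper's sentence introducing this corollary cites Theorem~\ref{Th1.1} by a slip, and Theorem~\ref{Th1.2}, which you used, is the correct source.
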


Both Theorems \ref{Th1.1} and \ref{Th1.2} were conjectured by the author in 2021 (cf. \cite{MO} and \cite{S21}), and they remain open for nearly five years.
We will deduce an auxiliary proposition in Section 2
via the discrete Fourier transforms, and then prove Theorems \ref{Th1.1} and \ref{Th1.2} in Section 3. 

Our following theorem is more general than Theorems \ref{Th1.1} and \ref{Th1.2}. We will also include its proof in Section 3.

\begin{theorem}\label{Th1.3} Let $a$ be any integer, and let $n$ be a positive odd integer.
Then
\begin{equation}\label{q-floor}\det\l[x+q^{\lfloor \f{aj-(a+1)k}n\rfloor}\r]_{1\ls j,k\ls n}
=\l(\f{a(a+1)}n\r)q^{(1-3n)/2}(1-q)^{n-1}(1+qx)
\end{equation}
and
\begin{equation}\label{q-ceil}\det\l[x+q^{\lceil \f{(a+1)j-ak}n\rceil}\r]_{1\ls j,k\ls n}
=\l(\f{a(a+1)}n\r)q^{(n-1)/2}(1-q)^{n-1}(x+q).
\end{equation}
\end{theorem}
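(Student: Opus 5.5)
The plan is to strip the floor function down to a residue, so that the matrix becomes a permuted circulant plus a rank-one perturbation, and then evaluate the circulant by the discrete Fourier transform.

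\emph{Reduction to residues.} Fix $t$ with $t^n=q$ and write $m_{jk}=aj-(a+1)k$ and $r_{jk}\in\{0,\dots,n-1\}$ for its least nonnegative residue mod $n$. Then $\lfloor m_{jk}/n\rfloor=(m_{jk}-r_{jk})/n$, so
$$q^{\lfloor m_{jk}/n\rfloor}=t^{aj}\,t^{-(a+1)k}\,t^{-r_{jk}}.$$
Pull the factors $t^{aj}$ out of the rows and $t^{-(a+1)k}$ out of the columns. Their total contribution is $t^{(a-(a+1))n(n+1)/2}=q^{-(n+1)/2}$, and we are left with
$$\det\Bigl[x\,t^{-aj+(a+1)k}+t^{-r_{jk}}\Bigr]_{1\ls j,k\ls n}=\det\bigl(C+x\,\u\v^{T}\bigr),$$
where $C=[t^{-r_{jk}}]$, $u_j=t^{-aj}$ and $v_k=t^{(a+1)k}$.

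\emph{The coprime case.} Assume $\gcd(a(a+1),n)=1$. Reindex the rows by $j\mapsto aj \bmod n$ and the columns by $k\mapsto (a+1)k \bmod n$. After this, $C$ becomes the circulant $[f(s-s')]$ with $f(s)=t^{-(s\bmod n)}$. By Zolotarev's lemma, multiplication by $a$ (resp.\ $a+1$) on $\Z/n\Z$ is a permutation of sign $(\f an)$ (resp.\ $(\f{a+1}n)$), which produces the factor $(\f{a(a+1)}n)$. The DFT then gives
$$\det C=\prod_{\zeta^n=1}\sum_{s=0}^{n-1}t^{-s}\zeta^{s}=\prod_{\zeta^n=1}\frac{1-q^{-1}}{1-\zeta/t}=\frac{(1-q^{-1})^n}{1-q^{-1}}=(1-q^{-1})^{n-1},$$
up to a sign convention that I will check carefully.

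For the rank-one term I apply the matrix determinant lemma, $\det(C+x\u\v^T)=\det C\,(1+x\,\v^TC^{-1}\u)$. The inverse of this circulant is explicit and sparse: $C^{-1}$ is the circulant whose only nonzero entries are $g(0)=1/(1-q^{-1})$ and $g(1)=-t^{-1}/(1-q^{-1})$, placed appropriately. Hence $\v^TC^{-1}\u$ reduces to a short sum, which must telescope to the value forced by the factor $1+qx$. Collecting the powers of $q$ should give \eqref{q-floor}. Specialising $x$ and using $[m]_q=(1-q^m)/(1-q)$ then recovers Theorem \ref{Th1.1}.

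\emph{The non-coprime case.} Suppose $d=\gcd(a,n)>1$; the case $\gcd(a+1,n)>1$ is the same argument applied to columns. Since $n$ is odd, $d\ge3$. Rows $j$ and $j+n/d$ have the same residues $r_{jk}$, so their $q$-power parts differ by the constant factor $q^{a/d}$. This gives at least $n-n/d\ge2$ independent linear relations among the rows of the $q$-power matrix. Hence that matrix has rank at most $n-2$, and adding the rank-one matrix $x\mathbf 1\mathbf 1^T$ leaves rank at most $n-1$. The determinant therefore vanishes, matching $(\f{a(a+1)}n)=0$.

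\emph{The ceiling identity and the main obstacle.} For \eqref{q-ceil} I write $\lceil y\rceil=-\lfloor -y\rfloor$ and run the same reduction. Now the residues are those of $(a+1)j-ak$, and the circulant has $f(s)=t^{s}$ in the appropriate form, giving the analogous product and the factor $x+q$. I expect the main obstacle to be the bookkeeping rather than any conceptual step: tracking the indexing shift, since indices run over $1,\dots,n$ and not $0,\dots,n-1$, under the permutations; getting the sign of the circulant determinant right; and making sure $\v^TC^{-1}\u$ collapses to exactly $1+qx$ (resp.\ $x+q$) up to the correct power of $q$. Before finishing the general computation I would check these against small cases such as $n=3$ and $a=1$.
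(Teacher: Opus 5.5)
Your plan is correct and is essentially the paper's proof: $C=[t^{-r_{jk}}]$ is exactly the matrix $Q$ of Proposition~\ref{Prop2.1} (you permute it to a genuine circulant before diagonalizing rather than factoring $Q=UCV$ through Fourier matrices, but the Zolotarev--Frobenius signs and DFT eigenvalues are the same), your sparse inverse is the paper's $Q^{-1}$, your $\v^TC^{-1}\u$ is precisely the paper's $s$ (the sum of the entries of $A^{-1}$), which telescopes to $q$ (and to $q^{-1}$ in the ceiling case, giving the factor $x+q$ after the prefactor $q^{(n+1)/2}$), and your rank argument for $\gcd(a(a+1),n)>1$ is Lemma~\ref{Lem-rank}. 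The only real difference is that you apply the matrix determinant lemma to $C+x\,\u\v^T$ with $x$ arbitrary, which gives linearity in $x$ directly, whereas the paper proves $x=0$ and $x=-1$ (Theorems~\ref{Th1.1} and~\ref{Th1.2}) separately and then interpolates via the linearity lemma [S24, Lemma 2.1]; note that there is in fact no sign ambiguity in $\det C$, and since $C^{-1}$ exists only for $q\neq1$, you should close with the Laurent-polynomial (or continuity) argument.
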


Taking $x=0$ and $q\in\{-1,2\}$ in Theorem \ref{Th1.3}, we obtain the following corollary.

\begin{corollary} Let $a$ be an integer, and let $n$ be a positive odd number. Then
\begin{equation}\begin{aligned}\label{-1-floor}\det\l[(-1)^{\lfloor \f{aj-(a+1)k}n\rfloor}\r]_{1\ls j,k\ls n}
=&\ \det\l[(-1)^{\lceil \f{(a+1)j-ak}n\rceil}\r]_{1\ls j,k\ls n}
\\=&\ \l(\f{a(a+1)}n\r)(-1)^{(n+1)/2}2^{n-1}.
\end{aligned}\end{equation}
Also,
\begin{equation}\label{2-floor}\det\l[2^{\lfloor \f{aj-(a+1)k}n\rfloor}\r]_{1\ls j,k\ls n}
=\l(\f{a(a+1)}n\r)2^{(1-3n)/2}
\end{equation}
and
\begin{equation}\label{2-ceil}\det\l[2^{\lceil \f{(a+1)j-ak}n\rceil}\r]_{1\ls j,k\ls n}
=\l(\f{a(a+1)}n\r)2^{(n+1)/2}.
\end{equation}
\end{corollary}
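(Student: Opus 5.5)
The corollary follows by specializing Theorem \ref{Th1.3}. Both sides of \eqref{q-floor} and \eqref{q-ceil} are Laurent polynomials in $q$ and polynomials in $x$, so we may substitute $x=0$ and any nonzero value of $q$. The only work is simplifying the signs and powers on the right-hand sides, using that $n$ is odd.

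\emph{The case $q=-1$.} Put $x=0$ and $q=-1$ in \eqref{q-floor}. The right-hand side becomes
$$\l(\f{a(a+1)}n\r)(-1)^{(1-3n)/2}\,2^{n-1}\cdot 1.$$
We have
$$\f{n+1}2-\f{1-3n}2=2n,$$
which is even, so $(-1)^{(1-3n)/2}=(-1)^{(n+1)/2}$. This gives the first expression in \eqref{-1-floor}.

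Now put $x=0$ and $q=-1$ in \eqref{q-ceil}. The right-hand side becomes
$$\l(\f{a(a+1)}n\r)(-1)^{(n-1)/2}\,2^{n-1}\cdot(-1)=\l(\f{a(a+1)}n\r)(-1)^{(n+1)/2}2^{n-1}.$$
This is the same value, which proves both equalities in \eqref{-1-floor}.

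\emph{The case $q=2$.} Put $x=0$ and $q=2$. Since $n$ is odd, $(1-q)^{n-1}=(-1)^{n-1}=1$.

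From \eqref{q-floor} the right-hand side is
$$\l(\f{a(a+1)}n\r)2^{(1-3n)/2}\cdot 1,$$
which is \eqref{2-floor}.

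From \eqref{q-ceil} the right-hand side is
$$\l(\f{a(a+1)}n\r)2^{(n-1)/2}\cdot 2=\l(\f{a(a+1)}n\r)2^{(n+1)/2},$$
which is \eqref{2-ceil}.

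There is no real obstacle once Theorem \ref{Th1.3} is available. The only points needing care are the parity check on the exponent of $-1$ and remembering the factor $x+q$ in \eqref{q-ceil}, which contributes $-1$ at $q=-1$ and $2$ at $q=2$.
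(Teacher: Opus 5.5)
Your proposal is correct and follows exactly the paper's route: the corollary is obtained by setting $x=0$ and $q\in\{-1,2\}$ in Theorem~\ref{Th1.3}, which is legitimate because those identities hold as Laurent polynomials in $q$. Your parity check $(-1)^{(1-3n)/2}=(-1)^{(n+1)/2}$ and your handling of the factors $(1-q)^{n-1}$ and $x+q$ are correct.
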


Throughout this paper, for any real number $x$ we call $\{x\}=x-\lfloor x\rfloor$ the {\it fractional part} of $x$. For a matrix $A=[a_{ij}]_{1\ls i,j\ls n}$ over a field,
we use $A^T$ to denote the transpose of $A$, and $\adj(A)$ to denote the {\it adjugate matrix} of $A$ giving by $\adj(A)=[A_{ji}]_{1\ls i,j\ls n}$, where $A_{ji}$ is the cofactor of the entry $a_{ji}$ in $A$. For complex numbers $c_1,\ldots,c_n$, by $\diag(c_1,\ldots,c_n)$ we mean the diagonal matrix
$[c_{jk}]_{1\ls j,k\ls n}$ with $c_{jj}=c_j$ for all $j=1,\ldots,n$.

\section{An Auxiliary Proposition}
\setcounter{lemma}{0}
\setcounter{theorem}{0}
\setcounter{corollary}{0}
\setcounter{remark}{0}
\setcounter{equation}{0}

We need a well known lemma on discrete Fourier transforms which can be easily proved.

\begin{lemma}\label{Lem2.1} Let $a_1,\ldots,a_n,b_1,\ldots,b_n$ be complex numbers. Then
$$a_m=\sum_{k=1}^n b_ke^{2\pi i km/n}\ \t{for all}\ m=1,\ldots,n$$
if and only if
$$b_m=\sum_{k=1}^n a_ke^{2\pi i km/n}\ \t{for all}\ k=1,\ldots,n.$$
\end{lemma}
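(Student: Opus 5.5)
The plan is to verify the stated equivalence by direct substitution, using the orthogonality relation for $n$th roots of unity:
$$\sum_{k=1}^n e^{2\pi i k r/n}=\begin{cases} n, & n\mid r,\\ 0, & n\nmid r.\end{cases}$$

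First, assume $a_m=\sum_{k=1}^n b_k e^{2\pi i km/n}$ for all $m=1,\ldots,n$. Substitute this into the right-hand side of the second relation and interchange the finite sums. Orthogonality then gives
$$\sum_{k=1}^n a_k e^{2\pi i km/n}=\sum_{l=1}^n b_l\sum_{k=1}^n e^{2\pi i k(l+m)/n}=n\,b_{m'},$$
where $m'\in\{1,\ldots,n\}$ is determined by $m'\equiv -m\pmod n$. The converse direction is the identical computation with the roles of $(a_m)$ and $(b_m)$ exchanged. This is the whole proof mechanism; the only care needed is in handling the residue classes modulo $n$.

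The main obstacle is that this computation does not return $b_m$ itself but $n\,b_{-m}$. So the literal equivalence, with the same kernel $e^{2\pi ikm/n}$ in both directions and no factor $1/n$, holds only when $n=1$. For $n=2$ it fails: take $b_1=1$, $b_2=0$, so $a_1=-1$, $a_2=1$, while $\sum_k a_k e^{2\pi i k\cdot1/2}=2\neq b_1$. (The quantifier "for all $k$" in the second relation should also read "for all $m$".)

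I would therefore prove exactly what the computation yields, namely the equivalence with $b_m$ replaced by $n\,b_{n-m}$, where $b_0:=b_n$. Equivalently, one can state it as $b_m=\frac1n\sum_k a_k e^{-2\pi ikm/n}$. I would note that this corrected form is presumably what Section 2 actually uses. The normalisation does not matter there as long as it is tracked consistently: a factor $n$ and the reflection $m\mapsto -m$ change a determinant only by $n^n$ times the sign of a permutation, and these can be carried through the later determinant evaluations.
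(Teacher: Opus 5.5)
Your diagnosis and proof are correct. As printed, the lemma fails for every $n\ge2$, not just $n=2$: your substitution returns $n\,b_{-m}$ rather than $b_m$, and for instance $b=(0,\ldots,0,1)$ gives $a_m=1$ for all $m$ but $\sum_k a_k e^{2\pi i kn/n}=n\neq b_n$, which also covers the odd $n$ used later. Your orthogonality argument then proves the intended inversion $b_m=\frac1n\sum_{k=1}^n a_k e^{-2\pi i km/n}$.

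The paper gives no proof of this lemma. That corrected form is exactly what Proposition~\ref{Prop2.1} uses, via $c_m=\frac1n\sum_{x=1}^n q^{-\{x/n\}}\zeta^{-mx}$, and your computation is the same orthogonality relation as in the proof of Lemma~\ref{Lem2.3}. So no extra tracking of factors $n$ or reflections is needed downstream.
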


The following lemma is Frobenius' extension (cf. \cite{BC15}) of the Zolotarev lemma \cite{Z}.

\begin{lemma}\label{Lem2.2} Let $n$ be a positive odd integer, and let $a\in\Z$ be relatively prime to $n$.
For $j=1,\ldots,n$, let $\lambda_a(j)$
be the least positive residue of $aj$ modulo $n$. Then the permutation $\lambda_a$ of $\{1,\ldots,n\}$ has the sign $\sign(\lambda_a)=(\f an)$.
\end{lemma}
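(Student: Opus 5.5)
The statement to prove is Lemma~\ref{Lem2.2}: for odd $n$ and $\gcd(a,n)=1$, the permutation $\lambda_a$ of $\{1,\ldots,n\}$ has sign $(\f an)$. I would argue by multiplicativity on both sides.

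First, $\lambda_a$ depends only on $a$ modulo $n$. For $a,b$ coprime to $n$ we have $\lambda_{ab}=\lambda_a\circ\lambda_b$. Hence $a\mapsto\sign(\lambda_a)$ is a homomorphism from $(\Z/n\Z)^\times$ to $\{\pm1\}$. The Jacobi symbol $a\mapsto(\f an)$ is also such a homomorphism. Note that $\lambda_a$ fixes $n$, so it is really a permutation of $\{1,\ldots,n-1\}$.

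Next, I factor $n$ via the Chinese remainder theorem. If $n=1$ the claim is trivial. For $n>1$, identify $\{1,\ldots,n\}$ with $\Z/n\Z$. The sign of multiplication by $a$ on $\Z/n\Z$ is computed from its cycle structure. I would first treat prime powers $n=p^k$ by direct analysis. For general $n=n_1n_2$ with $\gcd(n_1,n_2)=1$, I use $\Z/n\cong\Z/n_1\times\Z/n_2$ and the product-action sign formula:
$$\sign(\sigma\times\tau)=\sign(\sigma)^{n_2}\,\sign(\tau)^{n_1}.$$
Since $n_1,n_2$ are odd, this gives $\sign(\lambda_a^{(n)})=\sign(\lambda_a^{(n_1)})\,\sign(\lambda_a^{(n_2)})$. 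This matches the multiplicativity $(\f an)=(\f a{n_1})(\f a{n_2})$.

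For an odd prime power $p^k$, I decompose $\Z/p^k$ into the orbits $p^i(\Z/p^{k-i})^\times$ for $0\le i\le k$. Multiplication by $a$ acts on each orbit as on the cyclic group $(\Z/p^{m})^\times$, where $m=k-i$. A translation by a generator $g$ in a cyclic group of even order $N$ is a single $N$-cycle, hence odd. Therefore translation by $a$ has sign $+1$ exactly when $a$ is a square in $(\Z/p^m)^\times$, i.e.\ the sign is $(\f ap)$. Multiplying over $m=1,\ldots,k$ gives $(\f ap)^k=(\f a{p^k})$.

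The main obstacle is this cyclic-group step. It is the classical Zolotarev argument: the sign of translation by $a$ in a cyclic group of order $N$ is $(-1)^{(N/d-1)d}$, where $d$ is the number of cycles and each cycle has length $N/d=\mathrm{ord}(a)$. This sign equals $-1$ exactly when $\mathrm{ord}(a)$ is even and $N/\mathrm{ord}(a)$ is odd, i.e.\ exactly when $a$ is a non-square, using that $(\Z/p^m)^\times$ is cyclic of even order. Combining the prime-power case with the CRT step proves the lemma.
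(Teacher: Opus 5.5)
Your proof is correct. It cannot follow the paper's proof, because the paper gives none: it quotes the lemma as Frobenius' extension of Zolotarev's lemma and points to Brunyate--Clark [BC15].

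What you supply is a self-contained version of the classical argument, and every step checks out:
\begin{enumerate}[(i)]
\item You identify $\lambda_a$ with multiplication by $a$ on $\Z/n\Z$.
\item You reduce to prime powers via the Chinese remainder theorem and the product formula $\sign(\sigma\times\tau)=\sign(\sigma)^{n_2}\sign(\tau)^{n_1}$. Oddness of $n_1,n_2$ makes the exponents harmless.
\item You split $\Z/p^k\Z$ into the layers $p^i(\Z/p^{k-i}\Z)^\times$ of elements of exact $p$-adic valuation $i$, and compute the sign of a translation in a cyclic group of even order.
\end{enumerate}
A small wording point: these layers are orbits of the whole unit group, not of $\lambda_a$. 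You only need them to be $\lambda_a$-invariant, which they are, so nothing breaks.

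The citation buys brevity. Your route makes the note self-contained using only elementary group theory, but it relies on two facts you should state explicitly. First, $(\Z/p^m\Z)^\times$ is cyclic for odd $p$. Second, $a$ is a square modulo $p^m$ if and only if $(\frac ap)=1$, which your identification of the layer sign with $(\frac ap)$ silently uses. For the second, the reduction map $(\Z/p^m\Z)^\times\to(\Z/p\Z)^\times$ is a surjective homomorphism, so the preimage of the squares modulo $p$ is a subgroup of index $2$. A cyclic group has only one subgroup of index $2$, so this preimage is exactly the group of squares modulo $p^m$.
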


\begin{remark} C. Huang and H. Pan \cite{HP} established a new result similar to Lemma \ref{Lem2.2}.
\end{remark}

\begin{lemma} \label{Lem2.3} Let $\zeta$ be any primitive $n$th root of unity. Then
$$[\zeta^{jk}]_{1\ls j,k\ls n}^{-1}=\f1n[\zeta^{-jk}]_{1\ls j,k\ls n}.$$
\end{lemma}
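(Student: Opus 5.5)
We verify directly that the matrix $B=\frac1n[\zeta^{-jk}]_{1\ls j,k\ls n}$ is a right inverse of $A=[\zeta^{jk}]_{1\ls j,k\ls n}$. Since both are $n\times n$ matrices over a field, $AB=I$ forces $BA=I$ as well, so $B=A^{-1}$.

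The $(j,m)$ entry of $AB$ is
$$\frac1n\sum_{k=1}^n\zeta^{jk}\zeta^{-km}=\frac1n\sum_{k=1}^n\left(\zeta^{j-m}\right)^k.$$
This sum splits into two cases.

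\emph{Diagonal entries.} If $j=m$, every summand equals $1$. The sum is therefore $n$, and the entry equals $1$.

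\emph{Off-diagonal entries.} If $j\ne m$ with $1\ls j,m\ls n$, then $0<|j-m|<n$, so $n\nmid j-m$. Put $w=\zeta^{j-m}$. Because $\zeta$ is a \emph{primitive} $n$th root of unity, $w\neq1$, while $w^n=(\zeta^n)^{j-m}=1$. The geometric series formula then gives
$$\sum_{k=1}^n w^k=\frac{w(w^n-1)}{w-1}=0,$$
so the entry vanishes.

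Hence $AB$ is the identity matrix, which proves the lemma.

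The only real content is the vanishing of the geometric sum. That step is exactly where primitivity of $\zeta$ is needed, to guarantee $w\ne1$ for $j\ne m$ in the range $1\ls j,m\ls n$. Everything else is routine.
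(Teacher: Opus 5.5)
Your proof is correct and matches the paper's argument. Both compute the entries of $[\zeta^{jk}]\,[\zeta^{-jk}]$ as geometric sums in $\zeta^{j-k}$, which equal $n$ on the diagonal and vanish off it because primitivity gives $\zeta^{j-k}\neq 1$; your remark that $AB=I$ suffices for square matrices just makes explicit a step the paper leaves implicit.
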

\Proof. Let $A=[\zeta^{jk}]_{1\ls j,k\ls n}$ and $B=[\zeta^{-jk}]_{1\ls j,k\ls n}$.
For $j,k\in\{1,\ldots,n\}$, the $(j,k)$-entry of $C=AB$ is
$$c_{jk}=\sum_{m=1}^n\zeta^{jm}\zeta^{-mk}=\sum_{m=0}^{n-1}(\zeta^{j-k})^m
=\begin{cases}n&\t{if}\ j=k,
\\\f{1-(\zeta^{j-k})^n}{1-\zeta^{j-k}}=0&\t{if}\ j\not=k.
\end{cases}$$
So we have $A^{-1}=\f1n B$. This ends the proof. \qed

For convenience, for a statement $S$ we set
$$[\![S]\!]=\begin{cases}1&\t{if}\ S\ \t{holds},
\\0&\t{otherwise}.\end{cases}$$

\begin{proposition}\label{Prop2.1} Let $n$ be a positive odd integer and let $a$ be any integer.
Let $q\not=1$ be a positive real number, and define the matrix
\begin{equation}\label{Q}Q=\l[q^{-\{\f{aj-(a+1)k}n\}}\r]_{1\ls j,k\ls n}.
\end{equation}
Then
\begin{equation}\label{detQ}\det(Q)=\l(\f{a(a+1)}n\r)(1-q^{-1})^{n-1}.
\end{equation}
Moreover, when $\gcd(a(a+1),n)=1$ we have
$$Q^{-1}=\f 1{1-q^{-1}}[f(j,k)]_{1\ls j,k\ls n},$$
where
$$f(j,k)=[\![n\mid(a+1)j-ak]\!]-q^{-1/n}[\![n\mid (a+1)j-ak-1]\!].$$

\end{proposition}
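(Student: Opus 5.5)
The plan is to reduce $Q$, up to row and column permutations, to a circulant matrix, compute its determinant from its eigenvalues, and then check the claimed inverse by direct multiplication. For $r\in\Z$ write $g(r)=q^{-\bar r/n}$, where $\bar r\in\{0,\ldots,n-1\}$ is the least nonnegative residue of $r$ modulo $n$. Since $\{x/n\}=\bar x/n$ for $x\in\Z$, we have $Q=[g(aj-(a+1)k)]_{1\ls j,k\ls n}$, so each entry depends only on $aj-(a+1)k$ modulo $n$.

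\emph{Degenerate case.} Suppose $\gcd(a(a+1),n)>1$; then $(\f{a(a+1)}n)=0$. If $d=\gcd(a,n)>1$, rows $j$ and $j+n/d$ (when both lie in $\{1,\ldots,n\}$) have $a j\equiv a(j+n/d)\pmod n$, so they coincide. If $\gcd(a+1,n)>1$, two columns coincide in the same way. Either way $\det(Q)=0$, which is (\ref{detQ}).

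\emph{Main case, $\gcd(a(a+1),n)=1$.} Let $\lambda_a$ and $\lambda_{a+1}$ be the permutations of Lemma \ref{Lem2.2}. Then $Q_{jk}=g(\lambda_a(j)-\lambda_{a+1}(k))$, so $Q$ is obtained from the circulant $C=[g(u-v)]_{1\ls u,v\ls n}$ by permuting rows by $\lambda_a$ and columns by $\lambda_{a+1}$. Lemma \ref{Lem2.2} then gives
$$\det Q=\Big(\f an\Big)\Big(\f{a+1}n\Big)\det C.$$
Let $\zeta=e^{2\pi i/n}$ and $t=q^{-1/n}$. The eigenvalues of $C$ are the Fourier sums
$$\mu_m=\sum_{r=0}^{n-1}g(r)\zeta^{rm}=\sum_{r=0}^{n-1}(t\zeta^m)^r=\f{1-q^{-1}}{1-t\zeta^m},\qquad m=1,\ldots,n,$$
with eigenvectors $(\zeta^{-um})_u$; Lemma \ref{Lem2.3} justifies diagonalizing by the Fourier matrix. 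Since $\prod_{m=1}^n(1-t\zeta^m)=1-t^n=1-q^{-1}$, we get
$$\det C=\f{(1-q^{-1})^n}{1-q^{-1}}=(1-q^{-1})^{n-1}.$$
This proves (\ref{detQ}) in this case.

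\emph{Inverse.} I would verify the formula directly by showing $QF=(1-q^{-1})I$, where $F=[f(j,k)]_{1\ls j,k\ls n}$. Fix $j,k$. Because $\gcd(a+1,n)=1$, there is exactly one $m\in\{1,\ldots,n\}$ with $(a+1)m\equiv ak\pmod n$, and exactly one with $(a+1)m\equiv ak+1\pmod n$. So
$$(QF)_{jk}=g(s)-t\,g(s-1),\qquad s=a(j-k).$$
If $\bar s=0$, this equals $1-t\cdot q^{-(n-1)/n}=1-q^{-1}$. If $\bar s\ge1$, it equals $q^{-\bar s/n}-q^{-1/n}q^{-(\bar s-1)/n}=0$. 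Since $\gcd(a,n)=1$, $\bar s=0$ exactly when $j=k$, which gives the claimed $Q^{-1}$.

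The only delicate step is the sign bookkeeping. One must confirm that the row and column reindexings are exactly $\lambda_a$ and $\lambda_{a+1}$ as in Lemma \ref{Lem2.2}, rather than their inverses or negatives. This is harmless, because inverses have the same sign, and $\lambda_{-1}$ has sign $(\f{-1}n)$, which would appear twice and cancel.
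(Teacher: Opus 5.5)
Your proof is correct. For the determinant it is essentially the paper's Fourier argument, grouped differently.

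\textbf{Determinant.} The paper writes $Q=UCV$ with $U=[\zeta^{ajk}]$, $C=\diag(c_1,\ldots,c_n)$ and $V=[\zeta^{-(a+1)jk}]$, and applies Lemma \ref{Lem2.2} to the Fourier-type factors $U$ and $V$. You instead pull the permutations $\lambda_a,\lambda_{a+1}$ out of $Q$ itself first. What remains is a single circulant, which the Fourier matrix diagonalizes; your eigenvalues $\mu_m$ are $n$ times the paper's coefficients $c_m$, up to $m\mapsto -m$. This keeps the sign bookkeeping purely combinatorial. Your degenerate case (two equal rows or columns of $Q$) is also more direct than the paper's route of showing $\det U=0$ or $\det V=0$.

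\textbf{Inverse.} This is where you genuinely depart from the paper. The paper derives $F$ by inverting its factorization, $Q^{-1}=V^{-1}C^{-1}U^{-1}$, and evaluating the resulting geometric character sums. That is constructive: it would produce $f(j,k)$ without knowing it in advance. You instead verify $QF=(1-q^{-1})I$ directly. You use that each column of $F$ has exactly two nonzero entries, and that $g(s)=t\,g(s-1)$ whenever $n\nmid s$. Your check is shorter, needs no Fourier analysis, and is independent of the determinant computation. It does presuppose the formula for $f$, but the statement supplies it.

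\textbf{Closing remark.} Your worry about inverses or negatives of $\lambda_a,\lambda_{a+1}$ is unnecessary. Your reindexing $Q_{jk}=g(\lambda_a(j)-\lambda_{a+1}(k))$ against the circulant $[g(u-v)]$ uses exactly $\lambda_a$ and $\lambda_{a+1}$.
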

\Proof. Let $\zeta=e^{2\pi i/n}$. For any $x\in\Z$, by Lemma \ref{Lem2.1} we have
$$q^{-\{x/n\}}=\sum_{m=1}^n c_m\zeta^{mx},$$
where
$$c_m=\f1n\sum_{x=1}^{n}q^{-\{x/n\}}\zeta^{-mx}=\f1n\sum_{k=0}^{n-1} q^{-k/n}\zeta^{-km}=\f{1-q^{-1}}{n(1-q^{-1/n}\zeta^{-m})}$$
for all $m=1,\ldots,n.$ Thus, for any $j,k=1,\ldots,n$,
$$q^{\{-(aj-(a+1)k)/n\}}=\sum_{m=1}^n c_m\zeta^{(aj-(a+1)k)m}=\sum_{m=1}^n \zeta^{ajm}c_m\zeta^{-(a+1)mk}.$$
So $Q=UCV$, where $U=[\zeta^{ajk}]_{1\ls j,k\ls n}$, $V=[\zeta^{-(a+1)jk}]_{1\ls j,k\ls n}$,
and $C=\diag(c_1,\ldots,c_n)$. Hence
\begin{equation}\label{UCV}\det(Q)=\det(U)\det(C)\det(V)=c_1\cdots c_n\det(U)\det(V)
\end{equation}
If $\gcd(a,n)>1$ then $j=n/\gcd(a,n)\in\{1,\ldots,n-1\}$ and $\zeta^{aj}=a\zeta^{an}$.
So $\det(U)=0$ if $\gcd(a,n)>1$. Similarly, $\det(V)=0$ if $\gcd(a+1,n)>1$.
Therefore both sides of \eqref{detQ} vanish when $\gcd(a(a+1),n)>1$.

Below we assume that $\gcd(a(a+1),n)=1$. For $j=1,\ldots,n$, let $\la_a(j)$
be the least positive residue of $aj$ modulo $n$. Then
\begin{align*}\det(U)&=\sum_{\sigma\in S_n}\sign(\sigma)\prod_{j=1}^n\zeta^{aj\sigma(j)}
=\sum_{\sigma\in S_n}\sign(\sigma)\prod_{j=1}^n\zeta^{\la_a(j)\sigma(j)}
\\&=\sum_{\sigma\in S_n}\sign(\sigma)\prod_{k=1}^n\zeta^{k\sigma(\la_a^{-1}(k))}
=\sigma(\la_a)\sum_{\sigma\in S_n}\sign(\sigma\la_a^{-1})\prod_{k=1}^n\zeta^{k\sigma\la_a^{-1}(k)}.
\end{align*}
Combining this with Lemma \ref{Lem2.2}, we obtain $\det(U)=(\f an)\det(W)$, where
$W=[\zeta^{jk}]_{1\ls j,k\ls n}$.
Similarly, $\det(V)=(\f{a+1}n)\det(W^*)$, where $ W^*=[\zeta^{-jk}]_{1\ls j,k\ls n}$.
Thus, by Lemma \ref{Lem2.3} we have
$$\det(U)\det(V)=\l(\f{a(a+1)}n\r)\det(WW^*)=\l(\f{a(a+1)}n\r)\det(nI_n),$$
where $I_n=\diag(1,\ldots,1)$ is the identity matrix of order $n$.
combining this with \eqref{UCV}, we get
$$\det(Q)=c_1\cdots c_n\l(\f{a(a+1)}n\r)n^n=\l(\f{a(a+1)}n\r)(1-q^{-1})^{n-1}.$$

In view of Lemma \ref{Lem2.3},
\begin{align*}Q^{-1}&=(UCV)^{-1}=V^{-1}C^{-1}U^{-1}
\\&=\f1n [\zeta^{(a+1)jk}]_{1\ls j,k\ls n}\ \diag(c_1^{-1},\ldots,c_n^{-1})\ \f1n[\zeta^{-ajk}]_{1\ls j,k\ls n}.
\end{align*}
For any $j,k\in\{1,\ldots,n\}$, the $(j,k)$-entry of $Q^{-1}$ coincides
\begin{align*}&\ \sum_{m=1}^n \f{\zeta^{(a+1)jm}}n\times\f1{c_m}\times\f{\zeta^{-amk}}n
\\=&\ \f1{n^2}\sum_{m=1}^n\f{n(1-q^{-1/n}\zeta^{-m})}{1-q^{-1}}\zeta^{((a+1)j-ak)m}
\\=&\ \f1{n(1-q^{-1})}\(\sum_{m=1}^n\zeta^{((a+1)j-ak)m}-q^{-1/n}\sum_{m=1}^n\zeta^{((a+1)j-ak-1)m}\)
\\=&\ \f{f(j,k)}{1-q^{-1}}.
\end{align*}
This concludes our proof of Proposition \ref{Prop2.1}. \qed

\section{Proofs of Theorems \ref{Th1.1}-\ref{Th1.3}}
\setcounter{lemma}{0}
\setcounter{theorem}{0}
\setcounter{corollary}{0}
\setcounter{remark}{0}
\setcounter{equation}{0}

\medskip
\noindent{\it Proof of Theorem \ref{Th1.3} in the case $x=0$}. 
Set $x=0$. As the determinants in \eqref{q-floor} and \eqref{q-ceil}
are Laurent polynomials in $q$, it suffices to prove \eqref{q-floor} and \eqref{q-ceil}
for any fixed positive real number $q\not=1$.

 Let $A=[a_{jk}]_{1\ls j,k\ls n}$ with $$a_{jk}=q^{\lfloor\f{aj-(a+1)k}n\rfloor}=q^{\f anj}q^{-\{\f{aj-(a+1)k}n\}}q^{-\f{a+1}nk}.$$
Then $A=BQC$,
where $Q$ is given by \eqref{Q},
\begin{equation}\label{B}B=\diag(q^{a/n},q^{2a/n},\ldots,q^{na/n})
\end{equation}
and
\begin{equation}\label{C}C=\diag(q^{-(a+1)/n},q^{-2(a+1)/n},\ldots,q^{-n(a+1)/n}).
\end{equation}
Thus
$$\det(A)=\det(B)\det(Q)\det(C)=\prod_{m=1}^n q^{am/n-(a+1)m/n}\times\det(Q)$$
and hence
\begin{equation}\label{detA}\det(A)=q^{-(n+1)/2}\l(\f{a(a+1)}n\r)(1-q^{-1})^{n-1}
\end{equation}
with the aid of Proposition \ref{Prop2.1}. This proves \eqref{q-floor}.

Set $A'=[a_{jk}]_{1\ls j,k\ls n}$ with $$a'_{jk}=q^{\lceil\f{(a+1)j-ak}n\rceil}=q^{\f {a+1}nj}q^{\{\f{ak-(a+1)j}n\}}q^{-\f{a}nk}.$$
Then $A'=B'Q'C'$,
where
\begin{equation}\label{B'}B'=\diag(q^{(a+1)/n},q^{2(a+1)/n},\ldots,q^{n(a+1)/n}),
\end{equation}
\begin{equation}\label{C'}C'=\diag(q^{-a/n},q^{-2a/n},\ldots,q^{-na/n}),
\end{equation}
and
\begin{equation}\label{Q'}Q'=\l[q^{\{\f{ak-(a+1)j}n\}}\r]_{1\ls j,k\ls n}.
\end{equation}
Let $P=(Q')^T$. By Proposition \ref{Prop2.1},
$$\det(Q')=\det(P)=\l(\f{a(a+1)}n\r)(1-q)^{n-1}.$$
Note that
$$\det(A')=\det(B')\det(Q)\det(C')=\prod_{m=1}^nq^{\f{(a+1)m-am}n}\times\det(Q')$$
and hence
\begin{equation}\label{detA'}\det(A')=\l(\f{a(a+1)}n\r)q^{\f{n+1}2}(1-q)^{n-1}.
\end{equation}
This proves \eqref{q-ceil}.

In view of the above, we have completed the proof of Theorem \ref{Th1.3} in the case $x=0$. \qed

\begin{lemma}\label{Lem-rank}. Let $a$ be an integer, and let $n$ be a positive odd integer.
For the matrix $A$ and $A'$ defined in the proof of Theorem \ref{Th1.3} with $x=0$, we have
\begin{equation}\label{rank}\rank(A)\ls\f n3\ \ \t{and}\ \ \rank(A')\ls\f n3
\end{equation}
if $\gcd(a(a+1),n)>1$.
\end{lemma}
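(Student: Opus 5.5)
Since $B$ and $C$ in \eqref{B} and \eqref{C} are invertible diagonal matrices, we have $\rank(A)=\rank(BQC)=\rank(Q)$. So it suffices to bound $\rank(Q)$, and for this I would reuse the factorization $Q=UCV$ from the proof of Proposition \ref{Prop2.1}. There $U=[\zeta^{ajk}]_{1\ls j,k\ls n}$, $V=[\zeta^{-(a+1)jk}]_{1\ls j,k\ls n}$ and $C=\diag(c_1,\ldots,c_n)$. That factorization was derived for an arbitrary integer $a$ and positive real $q\ne1$, without any coprimality assumption. By the usual rank inequality for products,
$$\rank(Q)\ls\min\{\rank(U),\rank(V)\}.$$
(We do not even need $c_m\ne0$, although it holds because $q^{-1/n}\zeta^{-m}\ne1$ for positive real $q\ne1$.)

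Next I would bound $\rank(U)$. Let $d=\gcd(a,n)$. The $j$-th row of $U$ depends only on $aj$ modulo $n$. As $j$ runs over $1,\ldots,n$, the residues $aj \bmod n$ take exactly $n/d$ distinct values, namely the multiples of $d$ modulo $n$. Hence $U$ has at most $n/d$ distinct rows, and so $\rank(U)\ls n/d$. In the same way, $\rank(V)\ls n/d'$ with $d'=\gcd(a+1,n)$.

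Now assume $\gcd(a(a+1),n)>1$. Since $\gcd(a,a+1)=1$, any prime divisor $p$ of $\gcd(a(a+1),n)$ divides $d$ or $d'$. Because $n$ is odd, $p\gs3$, so $\max\{d,d'\}\gs3$. Therefore
$$\rank(A)=\rank(Q)\ls\min\{n/d,\,n/d'\}\ls n/3.$$

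For $A'$, the same argument gives $\rank(A')=\rank(Q')$ since $B'$ and $C'$ are invertible. Observe that
$$Q'_{jk}=q^{\{\f{ak-(a+1)j}n\}}=(q^{-1})^{-\{\f{ak-(a+1)j}n\}},$$
so $P=(Q')^T$ is exactly the matrix $Q$ of \eqref{Q} with $q$ replaced by $q^{-1}$, which is again a positive real number different from $1$. Hence $P$ also factors as $UC^*V$ with the same $U,V$ and some diagonal $C^*$, and therefore
$$\rank(A')=\rank(Q')=\rank(P)\ls n/3.$$

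The only delicate point is making sure the factorization $Q=UCV$ is valid without the hypothesis $\gcd(a(a+1),n)=1$. It is, since that hypothesis entered the proof of Proposition \ref{Prop2.1} only later, when computing $\det(U)$ and $\det(V)$. Finally, the claim $\rank(U)\ls n/d$ is simply the count of distinct rows, so no real obstacle remains.
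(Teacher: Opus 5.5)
Your proof is correct, but it reaches the bound by a different route from the paper's.

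The paper never passes to $Q$ or to the Fourier factorization. It observes directly that if $d=\gcd(a,n)>1$ and $j=r+\frac nd s$, then $a_{jk}=a_{rk}q^{as/d}$ for every $k$. So the rows of $A$ fall into at most $n/d$ proportionality classes. The case $\gcd(a+1,n)>1$ is handled the same way using columns, and $A'$ is dismissed with ``similarly''.

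You instead strip off the invertible diagonal factors and write $Q=UCV$. You correctly note that this factorization in the proof of Proposition~\ref{Prop2.1} comes before any use of $\gcd(a(a+1),n)=1$. You then count distinct rows of the character matrices $U$ and $V$.

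Both arguments rest on the same periodicity of $aj \bmod n$, and both give the bound $n/\max\{d,d'\}$.

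\begin{itemize}
\item The paper's version is more elementary. It works verbatim for any nonzero $q$, even a formal variable, with no fractional powers $q^{1/n}$ and no roots of unity.
\item Yours needs $q^{1/n}$ and the DFT setup. That is harmless here, since the lemma is only invoked for a fixed positive real $q\neq 1$.
\item In exchange, you treat the cases $d>1$ and $d'>1$ at once, through $\rank(Q)\ls\min\{\rank(U),\rank(V)\}$.
\item You also give an explicit reduction of $A'$ to $A$, via transposition and $q\mapsto q^{-1}$, where the paper only says ``similarly''.
\end{itemize}
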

\Proof. Suppose that $\gcd(a(a+1),n)>1$. Then $n\gs3$. Note that either $\gcd(a,n)>1$ or $\gcd(a+1,n)>1$.
We just prove $\rank(A)\ls n/3$ since the inequality $\rank(A')\ls n/3$ can be proved similarly.

Suppose that $d=\gcd(a,n)>1$. Then, for each $j=r+\f nd s$ with $r\in\{1,\ldots,n/d\}$ and $s\in\{0,\ldots,d-1\}$, obviously
$a_{jk}=a_{rk}q^{as/d}$ for all $k=1,\ldots,n$. Thus $\rank (A)\ls n/d\ls n/3$. 

Now we consider the case $d=\gcd(a+1,n)>1$. Then, for each $k=r+\f nd s$ with $r\in\{1,\ldots,n/d\}$ and $s\in\{0,\ldots,d-1\}$, obviously
$a_{jk}=a_{jr}q^{-(a+1)s/d}$ for all $j=1,\ldots,n$. Thus $\rank (A)\ls n/d\ls n/3$. 

In view of the above, we have completed our proof of Lemma \ref{Lem-rank}. \qed

\begin{lemma}[The Matrix-Determinant Lemma]\label{Lem3.1}
Let $A$ be an $n\times n$ matrix over a field $F$, and let $\mathbf{u}$ and $\mathbf{v}$ be two $n$-dimensional column vectors with entries in $F$. Then
\begin{equation*}
\det( A+\u\v^T)=\det(A)+ \v^T \adj(A)\u.
\end{equation*}
\end{lemma}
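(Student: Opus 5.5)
The plan is to prove the identity first for invertible $A$, then extend it to all $A$ by a density (polynomial identity) argument.

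\emph{Invertible case.} Suppose $A$ is invertible. Write
$$A+\u\v^T=A\,(I_n+A^{-1}\u\v^T).$$
It then suffices to show $\det(I_n+\mathbf{w}\v^T)=1+\v^T\mathbf{w}$ for any column vector $\mathbf{w}$; we apply this with $\mathbf{w}=A^{-1}\u$. For this, use the block factorization
$$\begin{bmatrix}I_n&0\\ \v^T&1\end{bmatrix}\begin{bmatrix}I_n+\mathbf{w}\v^T&\mathbf{w}\\ 0&1\end{bmatrix}\begin{bmatrix}I_n&0\\ -\v^T&1\end{bmatrix}=\begin{bmatrix}I_n&\mathbf{w}\\ 0&1+\v^T\mathbf{w}\end{bmatrix}.$$
This is checked by direct multiplication. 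The outer factors are unitriangular, so comparing determinants gives the claim. (Alternatively: $\mathbf{w}\v^T$ has rank at most one, so its only possibly nonzero eigenvalue is $\v^T\mathbf{w}$.) Hence
$$\det(A+\u\v^T)=\det(A)\bigl(1+\v^T A^{-1}\u\bigr)=\det(A)+\v^T\bigl(\det(A)A^{-1}\bigr)\u,$$
and $\det(A)A^{-1}=\adj(A)$ by the standard cofactor identity. This settles the invertible case.

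\emph{General case.} Both sides of the identity are polynomials with integer coefficients in the entries of $A$, $\u$ and $\v$, since the entries of $\adj(A)$ are cofactors. It therefore suffices to prove the identity over the field $\mathbb{Q}(a_{jk},u_j,v_k)$ with the entries regarded as independent indeterminates. There $\det(A)\ne0$, so the invertible case applies, and specializing the indeterminates yields the identity over any field $F$.

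A more hands-on alternative avoids generic matrices. Expand $\det(A+\u\v^T)$ by multilinearity in the columns, the $k$th column being $A_k+v_k\u$. Terms that use the vector $\u$ in two or more columns vanish, because those columns are proportional. What remains is $\det(A)$ plus $\sum_k v_k\det(A\text{ with column }k\text{ replaced by }\u)$. Expanding each such determinant along column $k$ gives $\sum_j u_jA_{jk}$, so the correction term is $\sum_{j,k}v_kA_{jk}u_j=\v^T\adj(A)\u$.

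The only delicate point is matching the index convention of $\adj(A)=[A_{ji}]$ with the expansion. I would actually present this multilinear argument, since it is field-independent and short, and simply double-check the indices.
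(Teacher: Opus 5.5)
Your proof is correct, and both arguments go through as written. The block factorization multiplies out to the claimed upper-triangular matrix. The generic-matrix step is legitimate over every field: both sides lie in $\mathbb{Z}[a_{jk},u_j,v_k]$, which embeds in $\mathbb{Q}(a_{jk},u_j,v_k)$, so you can then specialize to any $F$. In the multilinear argument the indices also match: with $\adj(A)=[A_{ji}]$ one gets $\mathbf{v}^T\adj(A)\mathbf{u}=\sum_{j,k}v_kA_{jk}u_j$. This is exactly $\sum_k v_k$ times the expansion along column $k$ of $A$ with that column replaced by $\mathbf{u}$. That expansion is valid because the cofactors $A_{jk}$ do not involve column $k$.

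The paper gives no proof of this lemma at all; it quotes it from the literature. So there is nothing to match, and your argument supplies a standard fact the paper takes for granted.

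Of your two routes, the column-multilinearity one is the better one to present. It needs no invertibility and no detour through a generic matrix. It works verbatim over any commutative ring, since killing the terms with two columns proportional to $\mathbf{u}$ uses only the alternating property of the determinant.

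For the paper's purposes, even your first half would suffice. The lemma is only applied, to $-A$ with $\mathbf{u}=\mathbf{v}=\mathbf{1}$, in the case $\gcd(a(a+1),n)=1$. There $A$ is invertible, and the paper immediately rewrites $\adj(A)=\det(A)A^{-1}$. So the invertible-case identity $\det(A+\mathbf{u}\mathbf{v}^T)=\det(A)\bigl(1+\mathbf{v}^TA^{-1}\mathbf{u}\bigr)$ is all that is actually used.
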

\begin{remark} This lemma can be found in \cite{P} or \cite{Vr}. It is a quite useful tool to study determinant problems. For example, it plays an indispensable role in L.-Y. Wang and H.-L. Wang's proof (cf. \cite{WW}) of a challenging conjecture of Z.-W. Sun \cite{S19}.
\end{remark}

For convenience, we let $\bf 1$ denote the $n$-dimensional column vector
with all the components equal to $1$. Note that $J=[\bf1,\ldots,\bf 1]$ is the matrix of order $n$
with all entries equal to one.

\medskip
\noindent{\it Proof of Theorem \ref{Th1.1}}. As the determinant in \eqref{floor} times $(1-q)^n$
is a Laurent polynomial in $q$, it suffices to prove \eqref{floor} for any fixed positive real number $q\not=1$.

Define $A$ as in the proof of Theorem \ref{Th1.3}. Then
$$(1-q)^n\det\l[\l[\l\lfloor\f{aj-(a+1)k}n\r\rfloor\r]_q\r]_{1\ls j,k\ls n}=\det(J-A).$$
If $\gcd(a(a+1),n)>1$, then 
$$\rank(J-A)\ls \rank(J)+\rank(-A)\ls1+\f n3<n$$
with the aid of \eqref{rank}, thus $\det(J-A)=0$ and hence both sides of \eqref{floor} vanish.

Below we assume that $\gcd(a(a+1),n)=1$. By Lemma \ref{Lem3.1},
\begin{align*}\det(J-A)=&\ \det(-A+{\bf1}{\bf1}^T)=\det(-A)+{\bf 1}^T\adj(-A){\bf1}
\\=&\ (-1)^n\det(A)+{\bf1}^T((-1)^{n-1}\adj(A)){\bf 1}.
\end{align*}
As $n$ is odd, we have
\begin{equation}\label{qjk}\det\l[\l[\l\lfloor\f{aj-(a+1)k}n\r\rfloor\r]_q\r]_{1\ls j,k\ls n}
=\f{\det(J-A)}{(1-q)^n}=\f{-\det(A)+{\bf1}^T\adj(A){\bf1}}{(1-q)^n}.
\end{equation}

Note that $A$ is invertible by \eqref{detA}.
Since $\adj(A)=\det(A)A^{-1}$, we see that
${\bf 1}^T\adj(A){\bf 1}=\det(A)s$, where
$s$ is the sum of all the entries of $A^{-1}$.
Let $B,C,Q$ be given by \eqref{B}, \eqref{C} and \eqref{Q} respectively.
Then $A^{-1}=C^{-1}Q^{-1}B^{-1}$.
Combining this with \eqref{qjk}, we obtain
\begin{equation}\label{s}\det\l[\l[\l\lfloor\f{aj-(a+1)k}n\r\rfloor\r]_q\r]_{1\ls j,k\ls n}
=\f{\det(A)(s-1)}{(1-q)^n}.
\end{equation}

In light of Proposition \ref{Prop2.1},
\begin{align*}(1-q^{-1})s&=\sum_{j=1}^n\sum_{k=1}^n q^{j(a+1)/n}f(j,k)q^{-ak/n}
\\&=\sum_{j,k=1}^nq^{\f{(a+1)j-ak}n}([\![n\mid(a+1)j-ak]\!]
-q^{-1/n}[\![n\mid(a+1)j-ak-1]\!])
\\&=\sum_{1\ls j,k\ls n\atop n\mid(a+1)j-ak}q^{\f{(a+1)j-ak}n}
-\sum_{1\ls j,k\ls n\atop n\mid(a+1)j-ak-1}q^{\f{(a+1)j-ak-1}n}
\\&=\sum_{1\ls j,k\ls n\atop n\mid(a+1)j-ak}q^{\f{(a+1)j-ak}n}
-\sum_{1\ls j,k\ls n\atop n\mid(a+1)(j-1)-a(k-1)}q^{\f{(a+1)(j-1)-a(k-1)}n}
\\&=\sum_{1\ls j,k\ls n\atop n\mid(a+1)j-ak}q^{\f{(a+1)j-ak}n}
-\sum_{0\ls j,k\ls n-1\atop n\mid(a+1)j-ak}q^{\f{(a+1)j-ak}n}
\\&=q^{\f{(a+1)n-an}n}-q^{\f{(a+1)0-a0}n}=q-1
\end{align*}
and hence $s=q$. Combining this with \eqref{detA} and \eqref{s}, we immediately obtain
the desired identity \eqref{floor}.

By the above, the proof of Theorem \ref{Th1.1} is now complete. \qed

\medskip
\noindent{\it Proof of Theorem \ref{Th1.2}}. As the determinant in \eqref{ceil} times $(1-q)^n$
is a Laurent polynomial in $q$, it suffices to prove \eqref{ceil} for any fixed positive real number $q\not=1$.

Define $A'$ as in the proof of Theorem \ref{Th1.3}. If $\gcd(a(a+1),n)>1$, then 
$$\rank(J-A')\ls \rank(J)+\rank(-A')\ls 1+\f n3<n$$
by Lemma \ref{Lem-rank}, hence $\det(J-A')=0$ and both sides of \eqref{ceil} vanish.

Below we assume that $\gcd(a(a+1),n)=1$. 
Similar to \eqref{qjk}, we have
\begin{equation}\label{q'jk}\det\l[\l[\l\lceil\f{(a+1)j-ak}n\r\rceil\r]_q\r]_{1\ls j,k\ls n}
=\f{-\det(A')+{\bf1}^T\adj(A'){\bf1}}{(1-q)^n}.
\end{equation}
 
Let $P=(Q')^T$, where $Q'$ is given by \eqref{Q'}. By Proposition \ref{Prop2.1},
for $j,k=1,\ldots,n$, the $(j,k)$-entry of $P^{-1}$
is
$$\f{[\![n\mid(a+1)j-ak]\!]-q^{1/n}[\![n\mid(a+1)j-ak-1]\!]}{1-q}.$$
As $(Q')^{-1}=(P^{-1})^T$, the $(j,k)$-entry (with $1\ls j,k\ls n$) of $(Q')^{-1}$
is
$$g(j,k)=\f{[\![n\mid(a+1)k-aj]\!]-q^{1/n}[\![n\mid(a+1)k-aj-1]\!]}{1-q}.$$
Similar to \eqref{s}, we have
\begin{equation}\label{s'}\det\l[\l[\l\lceil\f{(a+1)j-ak}n\r\rceil\r]_q\r]_{1\ls j,k\ls n}
=\f{\det(A')(s'-1)}{(1-q)^n},
\end{equation}
where $s'$ is the sum of all the entries of $(A')^{-1}$.
Note that $(A')^{-1}=(C')^{-1}(Q')^{-1}(B')^{-1}$, where $B'$ and $C'$ are given by
\eqref{B'} and \eqref{C'}, respectively.
Observe that
\begin{align*}s'&=\sum_{j,k=1}^n q^{aj/n}g(j,k)q^{-(a+1)k/n}
\\&=\sum_{j,k=1}^n q^{\f{aj-(a+1)k}n}\f{[\![n\mid(a+1)k-aj]\!]-q^{1/n}[\![n\mid(a+1)k-aj-1]\!]}{1-q}
\end{align*}
and hence
\begin{align*}(1-q)s'&=\sum_{1\ls j,k\ls n\atop n\mid aj-(a+1)k}q^{\f{aj-(a+1)k}n}
-\sum_{1\ls j,k\ls n\atop n\mid aj-(a+1)k+1}q^{\f{aj-(a+1)k+1}n}
\\&=\sum_{1\ls j,k\ls n\atop n\mid aj-(a+1)k}q^{\f{aj-(a+1)k}n}
-\sum_{1\ls j,k\ls n\atop n\mid a(j-1)-(a+1)(k-1)}q^{\f{a(j-1)-(a+1)(k-1)}n}
\\&=q^{\f{an-(a+1)n}n}-q^{\f{a0-(a+1)0}n}=q^{-1}-1=\f{1-q}q.
\end{align*}
Thus $s'=q^{-1}$. Combining this with \eqref{detA'} and \eqref{s'}, we immediately obtain the desired
\eqref{ceil}. This concludes our proof of Theorem \ref{Th1.2}. \qed

\medskip
\noindent{\it Proof of Theorem \ref{Th1.3}}. In view of \cite[Lemma 2.1]{S24},  
the determinants in \eqref{q-floor} and \eqref{q-ceil} are linear in $x$. 

As we have proved \eqref{q-floor} with $x=0$, and \eqref{q-floor} with $x=-1$
follows from \eqref{floor}, we have the equality \eqref{q-floor} with $x$ general. 

Similarly, as we have proved \eqref{q-ceil} with $x=0$, and \eqref{q-ceil} with $x=-1$
follows from \eqref{ceil}, we have the equality \eqref{q-ceil} with $x$ general. 

In view of the above, our proof of Theorem \ref{Th1.3} is now complete.

\Ack. The author would like to thank his PhD student Bo Jiang for helpful comments.

\end{document}